\newtheorem{theorem}{Theorem}
\renewcommand{\normalsize}{\fontsize{14}{20}\selectfont}
\begin{document}
\selectlanguage{english} \thispagestyle{empty}
 \pagestyle{myheadings}              



\vskip 2mm

 \begin{center}
 {\LARGE \bf  Asymptotically best possible Lebesque-type inequalities 
for the Fourier sums on  sets of  generalized Poisson integrals }
 \end{center}

 \begin{center}
\noindent {\rm A.\,S. SERDYUK$^{(1)}$, T.\,A. STEPANYUK$^{(1,2)}$}
 \end{center}

 \begin{center}
{ \it $^{(1)}$ Institute of Mathematics of Ukrainian National Academy of Sciences, 3, Tereshchenkivska st., 01601, Kyiv-4, Ukraine, serdyuk@imath.kiev.ua;\\
$^{(2)}$ Johann Radon Institute for Computational and Applied Mathematics (RICAM)
Austrian Academy of Sciences, Altenbergerstr. 69 4040, Linz, Austria;  Institute of Mathematics of NAS of Ukraine, 3, Tereshchenkivska st., 01601, Kyiv-4, Ukraine, tania$_{-}$stepaniuk@ukr.net}
 \end{center}

\vskip 3.5mm

{\rm  \large A\,b\,s\,t\,r\,a\,c\,t  \ In this paper we establish Lebesgue-type inequalities for $2\pi$-periodic functions $f$, which are defined by generalized Poisson integrals of the functions $\varphi$ from $L_{p}$, $1\leq p< \infty$. In these inequalities uniform norms of deviations of Fourier sums 
$\| f-S_{n-1} \|_{C}$ are expressed via best approximations $E_{n}(\varphi)_{L_{p}}$ of functions $\varphi$   by trigonometric polynomials in the metric of space $L_{p}$. We show that obtained estimates are asymptotically best possible. 
\hfill}

{\rm  \large K\,e\,y\,w\,o\,r\,d\,s  \ 
Lebesgue-type inequalities, Fourier sums,  generalized Poisson integrals, best approximations by trigonometric polynomials

{\bf Mathematics Subject Classification}:  Primary 42A10, 41A17.
\hfill}

\thispagestyle{empty} \normalsize \vskip 3.5mm


\section{Introduction}\label{intro}

Let $L_{p}$,
$1\leq p<\infty$, be the space of $2\pi$--periodic functions $f$ summable to the power $p$
on  $[0,2\pi)$, in which
the norm is given by the formula
$\|f\|_{p}=\Big(\int\limits_{0}^{2\pi}|f(t)|^{p}dt\Big)^{\frac{1}{p}}$; $L_{\infty}$ be the space of measurable and essentially bounded   $2\pi$--periodic functions  $f$ with the norm
$\|f\|_{\infty}=\mathop{\rm{ess}\sup}\limits_{t}|f(t)|$; $C$ be the space of continuous $2\pi$--periodic functions  $f$, in which the norm is specified by the equality
 ${\|f\|_{C}=\max\limits_{t}|f(t)|}$.

Denote by $C^{\alpha,r}_{\beta}L_{p}, \ \alpha>0, \ r>0, \ \beta\in\mathbb{R}, \ 1\leq p\leq\infty,$ the set of all  $2\pi$--periodic functions, representable for  all
$x\in\mathbb{R}$ as convolutions of the form (see, e.g., \cite[p.~133]{Stepanets1})
\begin{equation}\label{conv}
f(x)=\frac{a_{0}}{2}+\frac{1}{\pi}\int\limits_{-\pi}^{\pi}P_{\alpha,r,\beta}(x-t)\varphi(t)dt,
\ a_{0}\in\mathbb{R}, \  \ \varphi\perp1, 
\end{equation}
where $\varphi\in L_{p}$ and $P_{\alpha,r,\beta}(t)$ are fixed generated kernels

\begin{equation}\label{kernel}
P_{\alpha,r,\beta}(t)=\sum\limits_{k=1}^{\infty}e^{-\alpha k^{r}}\cos
\big(kt-\frac{\beta\pi}{2}\big), \ \  \ \alpha,r>0, \ \  \beta\in
    \mathbb{R}.
\end{equation}
The kernels  $P_{\alpha,r,\beta}$ of the form \eqref{kernel} are called generalized Poisson kernels. For $r=1$ and $\beta=0$ the kernels $P_{\alpha,r,\beta}$ are usual Poisson kernels of harmonic functions.

If the functions $f$ and $\varphi$ are related by the equality \eqref{conv}, then function $f$ in this equality is called generalized Poisson integral of the function $\varphi$ and is denoted by
 $\mathcal{J}^{\alpha,r}_{\beta}(\varphi)(f(\cdot))=\mathcal{J}^{\alpha,r}_{\beta}(\varphi,\cdot)$. The function $\varphi$ in equality (\ref{conv}) is called as generalized derivative of the function $f$ and is denoted by $f^{\alpha,r}_{\beta}$ ($\varphi(\cdot)=f^{\alpha,r}_{\beta}(\cdot)$).

The set of functions $f$ from $C^{\alpha,r}_{\beta}L_{p}$, $1\leq p\leq \infty$, such that 
 $ f^{\alpha,r}_{\beta}\in B_{p}^{0}$, where
\begin{equation*}
B_{p}^{0}=\left\{\varphi: \ ||\varphi||_{p}\leq 1, \  \varphi\perp1\right\},
\end{equation*}
we will denote by $C^{\alpha,r}_{\beta,p}$.

Let $\tau_{2n-1}$ be the space of all trigonometric polynomials of degree at most $n-1$ and let $E_{n}(f)_{L_{p}}$ be the best approximation of the function $f\in L_{p}$ in the metric of space $L_{p}$, $1\leq p\leq\infty$, by the trigonometric polynomials $t_{n-1}$ of degree $n-1$, i.e.,
\begin{equation*}
E_{n}(f)_{L_{p}}=\inf\limits_{t_{n-1}\in \tau_{2n-1}}\|f-t_{n-1}\|_{p}.
\end{equation*}
Analogously, by $E_{n}(f)_{C}$ we denote the best uniform approximation of the function $f$ from $C$ by trigonometric polynomials of order $n-1$, i.e., 
\begin{equation*}
E_{n}(f)_{C}=\inf\limits_{t_{n-1}\in \tau_{2n-1}}\|f-t_{n-1}\|_{C}.
\end{equation*}

Let  $\rho_{n}(f;x)$ be the following quantity
\begin{equation}\label{rhoF}
\rho_{n}(f;x):=f(x)-S_{n-1}(f;x),
\end{equation}
 where $S_{n-1}(f;\cdot)$ are the partial Fourier sums of order  $n-1$ of a function $f$.
 
 Least upper bounds of the quantity $\|\rho_{n}(f;\cdot)\|_{C}$  over the classes $C^{\alpha,r}_{\beta,p}$, we denote by $ {\mathcal E}_{n}(C^{\alpha,r}_{\beta,p})_{C}$, i.e.,
 \begin{equation}\label{sum}
 {\mathcal E}_{n}(C^{\alpha,r}_{\beta,p})_{C}=\sup\limits_{f\in
C^{\alpha,r}_{\beta,p}}\|\rho_{n}(f;\cdot)\|_{C},  \ \ \ r>0, \ \alpha>0, \ 1\leq p \leq \infty.
  \end{equation}
  
  Asymptotic behaviour of the quantities $ {\mathcal E}_{n}(C^{\alpha,r}_{\beta,p})_{C}$ of the form (\ref{sum}) was studied in \cite{Stepanets1}--\cite{SerdyukStepanyuk2018}.
  
  In \cite{Stepanets1989N4}--\cite{SerdyukStepanyuk2018Bul}  the analogs of the Lebesque inequalities for functions $f\in C^{\alpha,r}_{\beta}L_{p}$ have been found in the case $r\in(0,1)$ and $p=\infty$, and also in the case  $r\geq1$ and $1\leq p\leq\infty$, where the estimates for the deviations $\|f(\cdot)-S_{n-1}(f;\cdot)\|_{C}$ are expressed in terms of the best approximations $E_{n}(f^{\alpha,r}_{\beta})_{L_{p}}$. Namely, in \cite{Stepanets1989N4} it was proved that for arbitrary $f\in C^{\alpha,r}_{\beta}$, $r\in(0,1)$, $\beta\in\mathbb{R}$, the following inequality holds
   \begin{equation}\label{Stepanets}
 \|f(\cdot)-S_{n-1}(f;\cdot)\|_{C}\leq \Big(\frac{4}{\pi^{2}}\ln n^{1-r}+\mathcal{O}(1)\Big)e^{-\alpha n^{r}}E_{n}(f^{\alpha,r}_{\beta})_{C},
  \end{equation}
  where $\mathcal{O}(1)$ is a quantity uniformly bounded with respect to $n$, $\beta$ and \linebreak ${f\in C^{\alpha,r}_{\beta}C}$.
  It was also shown that for any function $f\in C^{\alpha,r}_{\beta}C$  and for every $n\in \mathbb{N}$ one can find a function $\mathcal{F}\cdot)=\mathcal{F}f;n;\cdot)$ in the set
 $ C^{\alpha,r}_{\beta}C$, such that
  $E_{n}(\mathcal{F}^{\alpha,r}_{\beta})_{C}=E_{n}(f^{\alpha,r}_{\beta})_{C}$ and for this function the relation \eqref{Stepanets} becomes an equality.

  The present paper is a continuation of \cite{Stepanets1989N4}--\cite{SerdyukStepanyuk2018Bul}, and is devoted to obtain asymptotically best possible analogs of Lebesgue-type inequalities on the sets $C^{\alpha,r}_{\beta}L_{p}$, $r\in(0,1)$ and $p\in[1,\infty)$. This case was not considered yet.

It should be also noticed, that asymptotically best possible Lebesgue inequalities on classes of generalized Poisson integrals  $C^{\alpha,r}_{\beta}L_{p}$ for $r\in(0,1)$,  $p=\infty$  and $r\geq 1$, $1\leq p\leq\infty$ also were established for approximations by Lagrange trigonometric interpolation polynomials with uniform distribution of interpolation nodes (see, e.g., \cite{StepanetsSerdyuk2000}--\cite{StepanetsSerdyuk2012}).
  
\section{Main results}\label{intro}  
  Let us formulate the results of the paper.

By $F(a,b;c;d)$ we denote Gauss hypergeometric function
\begin{equation}\label{hypergeom}
F(a,b;c;z)=1+\sum\limits_{k=1}^{\infty}\frac{(a)_{k}(b)_{k}}{(c)_{k}}\frac{z^{k}}{k!},
\end{equation}
\begin{equation*}
(x)_{k}:=x(x+1)(x+2)...(x+k-1).
\end{equation*}

For arbitrary  $\alpha>0$, $r\in(0,1)$ and $1\leq p<\infty$ we denote by $n_0=n_0(\alpha,r,p)$ the smallest integer $n$ such that
\begin{equation}\label{n_p}
 \frac{1}{\alpha r}\frac{1}{n^{r}}+\frac{\alpha r p}{n^{1-r}}\leq{\left\{\begin{array}{cc}
 \frac{1}{14},  & p=1, \\
\frac{1}{(3\pi)^3}\cdot\frac{p-1}{p}, & 1< p<\infty.
  \end{array} \right.}
\end{equation}

The following theorem takes place.

\begin{theorem}\label{theorem1}
Let $0<r<1$,  $\alpha>0$, $\beta\in\mathbb{R}$ and $n\in \mathbb{N}$. Then in the case $1< p<\infty$ for any function
 $f\in C^{\alpha,r}_{\beta}L_{p}$ and $n\geq n_0(\alpha,r,p)$, the following inequality holds
\begin{equation*}
\|f(\cdot)-S_{n-1}(f;\cdot)\|_{C}\leq e^{-\alpha n^{r}}n^{\frac{1-r}{p}}\bigg(\frac{\|\cos t\|_{p'}}{\pi^{1+\frac{1}{p'}}(\alpha r)^{\frac{1}{p}}}F^{\frac{1}{p'}}\Big(\frac{1}{2}, \frac{3-p'}{2}; \frac{3}{2}; 1\Big)
\end{equation*}
\begin{equation}\label{Theorem1Ineq1}
+
\gamma_{n,p}\Big(\Big(1+\frac{(\alpha r)^{\frac{p'-1}{p}}}{p'-1}\Big) \frac{1}{n^{\frac{1-r}{p}}}+\frac{(p)^{\frac{1}{p'}}}{(\alpha r)^{1+\frac{1}{p}}}\frac{1}{n^{r}}\Big)\bigg)E_{n}(f^{\alpha,r}_{\beta})_{L_{p}}, \ \ \frac{1}{p}+\frac{1}{p'}=1,
\end{equation}
where $F(a,b;c;d)$ is Gauss hypergeometric function.

Moreover, for any function  $f\in C^{\alpha,r}_{\beta}L_{p}$ one can find a function $\\mathcal{F}(x)=\mathcal{F}f;n;x)$, such that $E_{n}(\mathcal{F}^{\alpha,r}_{\beta})_{L_p}=E_{n}(f^{\alpha,r}_{\beta})_{L_p}$ and the following equality holds
 \begin{align}\label{Theorem1Eq1}
&\|\mathcal{F}\cdot)-S_{n-1}(F;\cdot)\|_{C} \notag \\
&= e^{-\alpha n^{r}}n^{\frac{1-r}{p}}\bigg(\frac{\|\cos t\|_{p'}}{\pi^{1+\frac{1}{p'}}(\alpha r)^{\frac{1}{p}}}F^{\frac{1}{p'}}\Big(\frac{1}{2}, \frac{3-p'}{2}; \frac{3}{2}; 1\Big)\notag \\
&+
\gamma_{n,p}\Big(\Big(1+\frac{(\alpha r)^{\frac{p'-1}{p}}}{p'-1}\Big) \frac{1}{n^{\frac{1-r}{p}}}+\frac{(p)^{\frac{1}{p'}}}{(\alpha r)^{1+\frac{1}{p}}}\frac{1}{n^{r}}\Big)\bigg)E_{n}(f^{\alpha,r}_{\beta})_{L_{p}}, \ \ \frac{1}{p}+\frac{1}{p'}=1.
\end{align}

In \eqref{Theorem1Ineq1}  and \eqref{Theorem1Eq1}   the quantity ${\gamma_{n,p}=\gamma_{n,p}(\alpha,r,\beta)}$ is such that ${|\gamma_{n,p}|\leq(14\pi)^{2}}$.
\end{theorem}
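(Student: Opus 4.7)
The plan is to reduce the Lebesgue-type inequality to a sharp $L_{p'}$-norm estimate for the tail of the generalized Poisson kernel, and then to realize sharpness via a H\"older-duality extremizer. Since $f=\mathcal{J}^{\alpha,r}_{\beta}(\varphi)$ with $\varphi=f^{\alpha,r}_{\beta}\perp 1$, and $S_{n-1}$ kills all harmonics of order less than $n$, one has
\begin{equation*}
\rho_{n}(f;x)=\frac{1}{\pi}\int_{-\pi}^{\pi}P_{\alpha,r,\beta,n}(x-t)\bigl(\varphi(t)-t^{*}_{n-1}(t)\bigr)\,dt,
\end{equation*}
where $P_{\alpha,r,\beta,n}(u):=\sum_{k\geq n}e^{-\alpha k^{r}}\cos(ku-\beta\pi/2)$ and $t^{*}_{n-1}$ is the polynomial of best $L_{p}$-approximation to $\varphi$ (the substitution is legal because $P_{\alpha,r,\beta,n}\perp\tau_{2n-1}$). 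H\"older's inequality then gives $\|\rho_{n}(f;\cdot)\|_{C}\leq\frac{1}{\pi}\|P_{\alpha,r,\beta,n}\|_{p'}E_{n}(\varphi)_{L_{p}}$, so the full task reduces to an explicit asymptotic for $\|P_{\alpha,r,\beta,n}\|_{p'}$ with quantitative remainder.

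For this asymptotic, I would write $k=n+s$ and Taylor-expand $(n+s)^{r}=n^{r}+rn^{r-1}s+\Delta(n,s)$ with $\Delta(n,s)=O(n^{r-2}s^{2})$, in order to factor
\begin{equation*}
P_{\alpha,r,\beta,n}(t)=e^{-\alpha n^{r}}\,\mathrm{Re}\!\left[\frac{e^{i(nt-\beta\pi/2)}}{1-e^{-\beta_{n}+it}}\right]+R_{n}(t),\qquad\beta_{n}:=\alpha r n^{r-1},
\end{equation*}
where $R_{n}$ absorbs the correction $e^{-\alpha\Delta(n,s)}-1$. Setting $g(t):=(1-e^{-\beta_{n}+it})^{-1}$, one has $|1-e^{-\beta_{n}+it}|^{2}=(1-e^{-\beta_{n}})^{2}+4e^{-\beta_{n}}\sin^{2}(t/2)$. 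Since $g$ varies slowly against the fast phase $e^{i(nt-\beta\pi/2)}$, a standard averaging argument gives $\int|\mathrm{Re}(ge^{i(nt-\beta\pi/2)})|^{p'}dt=(2\pi)^{-1}\|\cos\tau\|_{p'}^{p'}\|g\|_{p'}^{p'}(1+o(1))$, while the substitution $u=2\sin(t/2)/\beta_{n}$ transforms $\|g\|_{p'}^{p'}$ into $\beta_{n}^{1-p'}\!\int_{-\infty}^{\infty}(1+u^{2})^{-p'/2}du\,(1+o(1))$; by Gauss's formula $F(a,b;c;1)=\Gamma(c)\Gamma(c-a-b)/(\Gamma(c-a)\Gamma(c-b))$ this last Euler integral equals $2F(1/2,(3-p')/2;3/2;1)$. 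Combining this with $\beta_{n}^{-1/p}=(\alpha r)^{-1/p}n^{(1-r)/p}$ recovers exactly the leading term of (\ref{Theorem1Ineq1}).

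The error term $\gamma_{n,p}$ then collects four contributions: (i) the multiplicative Taylor correction $e^{-\alpha\Delta(n,s)}-1$; (ii) the Euler--Maclaurin-type discretization error in passing from the sum over $s$ to an integral in $u$; (iii) the Laplace-type truncation at the effective scale $|u|\sim 1/\beta_{n}$; and (iv) the fast-phase averaging of $|\mathrm{Re}(ge^{int})|^{p'}$. Each is bounded by a numerical multiple of the two quantities $1/(\alpha r n^{r})$ and $\alpha r p/n^{1-r}$ appearing in (\ref{n_p}), and the threshold $n_{0}$ is set precisely so that the total is controlled by $(14\pi)^{2}$. For sharpness (\ref{Theorem1Eq1}), pick $x_{0}$ at which $\|\rho_{n}\|_{C}$ is attained and take $\mathcal{F}=\mathcal{J}^{\alpha,r}_{\beta}(\psi)$ with
\begin{equation*}
\psi(t)=c\,\mathrm{sgn}\bigl(P_{\alpha,r,\beta,n}(x_{0}-t)\bigr)\bigl|P_{\alpha,r,\beta,n}(x_{0}-t)\bigr|^{p'-1},
\end{equation*}
the constant $c>0$ being chosen so that $E_{n}(\psi)_{L_{p}}=E_{n}(f^{\alpha,r}_{\beta})_{L_{p}}$; this is the standard dual-norm extremizer, realizing H\"older equality and hence equality in (\ref{Theorem1Ineq1}) up to the same $\gamma_{n,p}$-type correction.

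The main obstacle is the second paragraph: extracting the precise leading constant with its hypergeometric identification while simultaneously keeping the remainder uniform with the concrete value $(14\pi)^{2}$. The rather generous bound reflects that one must use somewhat coarse but global quantitative versions of the fast-phase averaging, the Riemann-sum approximation of $\int(1+u^{2})^{-p'/2}du$, and the Taylor estimate for $(n+s)^{r}-n^{r}-rn^{r-1}s$; the threshold (\ref{n_p}) is precisely the one that makes both competing error scales simultaneously controllable across the full range $1<p<\infty$.
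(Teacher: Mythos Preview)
Your overall architecture matches the paper's exactly: reduce via H\"older to $\frac{1}{\pi}\|P_{\alpha,r,\beta}^{(n)}\|_{p'}E_{n}(f^{\alpha,r}_{\beta})_{L_{p}}$, invoke a sharp asymptotic for the kernel norm, and realize sharpness with the H\"older dual function. Two points are worth flagging.

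First, the paper does \emph{not} derive the asymptotic for $\|P_{\alpha,r,\beta}^{(n)}\|_{p'}$ here; it imports it wholesale from \cite{SerdyukStepanyuk2017,SerdyukStepanyuk2018} (the formulas labelled (\ref{normKern}), (\ref{form106}), (\ref{eqHypergeom}) in the text), together with the elementary bound $\int_{0}^{\infty}(1+t^{2})^{-p'/2}dt<p^{1/p'}$. Your sketch of the derivation---Taylor expansion of $(n+s)^{r}$, geometric-series main term, fast-phase averaging, Euler-integral identification---is the right outline of what those cited papers actually do, but the constant $(14\pi)^{2}$ is not something your sketch delivers; it comes from the detailed bookkeeping there. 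So within the present paper your second paragraph is replaced by a citation.

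Second, in your sharpness construction there is a step you pass over: having defined $\psi\propto|P^{(n)}_{\alpha,r,\beta}(x_{0}-\cdot)|^{p'-1}\operatorname{sign}P^{(n)}_{\alpha,r,\beta}(x_{0}-\cdot)$, you need $E_{n}(\psi)_{L_{p}}=\|\psi\|_{p}$, i.e.\ that the zero polynomial is the best $L_{p}$-approximant to $\psi$. The paper checks this explicitly via the standard duality characterization (Proposition 1.4.12 of \cite{Korn}): since $(p'-1)(p-1)=1$, one has $|\psi|^{p-1}\operatorname{sign}\psi\propto P^{(n)}_{\alpha,r,\beta}(x_{0}-\cdot)$, which is orthogonal to $\tau_{2n-1}$, so $\int t_{n-1}|\psi|^{p-1}\operatorname{sign}\psi=0$ for every $t_{n-1}$ and hence $t_{n-1}^{*}\equiv 0$. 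Without this verification, ``choosing $c$ so that $E_{n}(\psi)_{L_{p}}=E_{n}(f^{\alpha,r}_{\beta})_{L_{p}}$'' is not yet linked to the value $\frac{1}{\pi}\|P^{(n)}\|_{p'}\|\psi\|_{p}$ that your H\"older-equality computation produces. (The paper also notes, equivalently, that $P^{(n)}_{\alpha,r,\beta}(-t)=P^{(n)}_{\alpha,r,-\beta}(t)$, which lets one take $x_{0}=0$.)
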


\begin{proof}[Proof of Theorem~\ref{theorem1}]
Let us prove at the beginning the inequality \eqref{Theorem1Ineq1} .

Let $f\in C^{\alpha,r}_{\beta}L_{p}$, $1\leq p\leq \infty$. Then, at every point $x\in \mathbb{R}$ the following integral representation is true:
\begin{equation}\label{repr}
\rho_{n}(f;x)=f(x)-S_{n-1}(f;x)=
\frac{1}{\pi}\int\limits_{-\pi}^{\pi}f^{\alpha,r}_{\beta}(t)P_{\alpha,r,\beta}^{(n)}(x-t)dt,
\end{equation}
  where
 \begin{equation}\label{kernelN}
P_{\alpha,r,\beta}^{(n)}(t):=
\sum\limits_{k=n}^{\infty}e^{-\alpha k^{r}}\cos\Big(kt-\frac{\beta\pi}{2}\Big),  \ 0<r<1, \ \alpha>0, \ \beta\in\mathbb{R}.
\end{equation}
 
The function $P_{\alpha,r,\beta}^{(n)}(t)$ is orthogonal to any trigonometric polynomial $t_{n-1}$ of degree not greater than $n-1$. Hence, for any polynomial $t_{n-1}\in\tau_{2n-1}$  we obtain
\begin{equation}\label{for1}
\rho_{n}(f;x)=
\frac{1}{\pi}\int\limits_{-\pi}^{\pi}\delta_{n}(t)P_{\alpha,r,\beta}^{(n)}(x-t)dt,
\end{equation}
where
\begin{equation}\label{delta}
\delta_{n}(x)=\delta_{n}(\alpha,r,\beta;x):=f^{\alpha,r}_{\beta}(x)-t_{n-1}(x).
\end{equation}

Further we choose the polynomial $t_{n-1}^{*}$ of the best approximation of the function $f^{\alpha,r}_{\beta}$ in the space $L_{p}$, i.e., such that
\begin{equation*}
\| f^{\alpha,r}_{\beta}-t^{*}_{n-1}\|_{p}=E_{n}(f^{\alpha,r}_{\beta})_{L_{p}}, \ \ 1\leq p\leq\infty,
\end{equation*}
to play the role of $t_{n-1}$ in (\ref{for1}). Thus, by using the inequality
\begin{equation}\label{HolderIneq}
\bigg\|\int\limits_{-\pi}^{\pi}K(t-u)\varphi(u)du \bigg\|_{C}\leq \|K\|_{p'}\|\varphi\|_{p},
\end{equation}
\begin{equation*}
\varphi\in L_{p}, \ \ K\in L_{p'}, \ \ 1\leq p\leq\infty, \ \ \frac{1}{p}+\frac{1}{p'}=1
\end{equation*}
(see, e.g., \cite[p. 43]{Korn}), we get
\begin{equation}\label{for2}
\|f(\cdot)-S_{n-1}(f;\cdot)\|_{C}\leq
\frac{1}{\pi}\|P_{\alpha,r,\beta}^{(n)}\|_{p'}E_{n}(f^{\alpha,r}_{\beta})_{L_{p}}.
\end{equation}

It follows from the paper  \cite{SerdyukStepanyuk2017} (see, e.g., also \cite{SerdyukStepanyuk2016} and \cite{SerdyukStepanyuk2018})
 for arbitrary $r\in(0,1)$, $\alpha>0$, $\beta\in\mathbb{R}$, $1< p<\infty$, $\frac{1}{p}+\frac{1}{p'}=1$, $n\in\mathbb{N}$ and $n\geq n_0(\alpha,r,p)$
 the following  estimate holds
\begin{align}\label{normKern}
\frac{1}{\pi}\|P_{\alpha,r,\beta}^{(n)} \|_{p'}
&=e^{-\alpha n^{r}}n^{\frac{1-r}{p}}\left(\frac{\|\cos t\|_{p'}}{\pi^{1+\frac{1}{p'}}(\alpha r)^{\frac{1}{p}}}\left(  \int\limits_{0}^{\frac{\pi n^{1-r}}{\alpha r}}\frac{dt}{(t^{2}+1)^{\frac{p'}{2}}} \right)^{\frac{1}{p'}} \right.\notag \\
&
\left.+
\gamma_{n,p}^{(1)}\left(\frac{1}{(\alpha r)^{1+\frac{1}{p}}} \left(  \int\limits_{0}^{\frac{\pi n^{1-r}}{\alpha r}}\frac{dt}{(t^{2}+1)^{\frac{p'}{2}}} \right)^{\frac{1}{p'}}\frac{1}{n^{r}}+\frac{1}{n^{\frac{1-r}{p}}}\right)\right),
\end{align}
where $\frac{1}{p}+\frac{1}{p'}=1$ and the quantity ${\gamma_{n,p}^{(1)}=\gamma_{n,p}^{(1)}(\alpha,r,\beta)}$ satisfies the inequality  ${|\gamma_{n,p}^{(1)}|\leq(14\pi)^{2}}$.

In \cite{SerdyukStepanyuk2016} and \cite{SerdyukStepanyuk2017} it was mentioned that formula (\ref{normKern}) also holds, if in its second part instead $\frac{1}{\pi}\|P_{\alpha,r,\beta}^{(n)} \|_{p'}$ to put   $\frac{1}{\pi}\inf\limits_{\lambda\in\mathbb{R}}\|P_{\alpha,r,\beta}^{(n)} -\lambda\|_{p'}$  or
  $\sup\limits_{h\in\mathbb{R}}\frac{1}{2\pi}\|P_{\alpha,r,\beta}^{(n)} (t+h)-P_{\alpha,r,\beta}^{(n)} (t)\|_{p'}$  

Formula (106) from  \cite{SerdyukStepanyuk2018} gives the following estimate
\begin{equation}\label{form106}
\left(  \int\limits_{0}^{\frac{\pi n^{1-r}}{\alpha r}}\frac{dt}{(t^{2}+1)^{\frac{p'}{2}}} \right)^{\frac{1}{p'}}=
\left(  \int\limits_{0}^{\infty}\frac{dt}{(t^{2}+1)^{\frac{p'}{2}}} \right)^{\frac{1}{p'}}+ 
\frac{\Theta_{\alpha,r,p,n}^{(1)}}{p'-1}\Big(\frac{\alpha r}{\pi n^{1-r}} \Big)^{p'-1}, \ \ |\Theta_{\alpha,r,p,n}^{(1)}|<2.
\end{equation}

In the work \cite{SerdyukStepanyuk2017} (see formula (27)) it was shown, that for arbitrary \linebreak ${1<p'<\infty}$ the following equality takes place
\begin{equation}\label{eqHypergeom}
\left(  \int\limits_{0}^{\infty}\frac{dt}{(t^{2}+1)^{\frac{p'}{2}}} \right)^{\frac{1}{p'}}=
F^{\frac{1}{p'}}\Big(\frac{1}{2},\frac{3-p'}{2};\frac{3}{2};1 \Big).
\end{equation}

Taking into account the following estimate
\begin{equation}\label{ineq1}
\left(  \int\limits_{0}^{\frac{\pi n^{1-r}}{\alpha r}}\frac{dt}{(t^{2}+1)^{\frac{p'}{2}}} \right)^{\frac{1}{p'}}\leq\left(  \int\limits_{0}^{\infty}\frac{dt}{(t^{2}+1)^{\frac{p'}{2}}} \right)^{\frac{1}{p'}}<
\left( 1+ \int\limits_{1}^{\infty}\frac{dt}{t^{p'}} \right)^{\frac{1}{p'}}<(p)^{\frac{1}{p'}},
\end{equation}
formulas (\ref{normKern})--(\ref{ineq1}) imply that for $n\geq n_{0}(\alpha,r,p)$, $1<p<\infty$,  $\frac{1}{p}+\frac{1}{p'}=1$,
\begin{align}\label{normKern1}
\frac{1}{\pi}\big\|P_{\alpha,r,\beta}^{(n)} \big\|_{p'}&= e^{-\alpha n^{r}}n^{\frac{1-r}{p}}\bigg(\frac{\|\cos t\|_{p'}}{\pi^{1+\frac{1}{p'}}(\alpha r)^{\frac{1}{p}}}F^{\frac{1}{p'}}\Big(\frac{1}{2}, \frac{3-p'}{2}; \frac{3}{2}; 1\Big)
\notag \\
&+
\gamma_{n,p}^{(1)}\Big(\frac{1}{p'-1}\frac{(\alpha r)^{\frac{p'-1}{p}}}{ n^{(1-r)(p'-1)}}+\frac{p^{\frac{1}{p'}}}{(\alpha r)^{1+\frac{1}{p}}}\frac{1}{n^{r}}+\frac{1}{n^{\frac{1-r}{p}}}\Big)\bigg) \notag \\
&= e^{-\alpha n^{r}}n^{\frac{1-r}{p}}\bigg(\frac{\|\cos t\|_{p'}}{\pi^{1+\frac{1}{p'}}(\alpha r)^{\frac{1}{p}}}F^{\frac{1}{p'}}\Big(\frac{1}{2}, \frac{3-p'}{2}; \frac{3}{2}; 1\Big) \notag \\
&+
\gamma_{n,p}^{(2)}\Big(
\Big(1+\frac{(\alpha r)^{\frac{p'-1}{p}}}{p'-1}\Big)\frac{1}{ n^{\frac{1-r}{p}}}+\frac{p^{\frac{1}{p'}}}{(\alpha r)^{1+\frac{1}{p}}}\frac{1}{n^{r}}\Big)\bigg), 
\end{align}
 where the quantities ${\gamma_{n,p}^{(i)}=\gamma_{n,p}^{(i)}(\alpha,r,\beta)}$,  satisfy the inequality  ${|\gamma_{n,p}^{(i)}|\leq(14\pi)^{2}}$, \linebreak $i=1,2$.
Formula (\ref{Theorem1Ineq1}) follows from (\ref{for2}) and (\ref{normKern1}).

 To prove the second part of Theorem~\ref{theorem1}, according to the equality (\ref{for1}), for arbitrary $\varphi\in L_{p}$ we should find the function $\Phi(\cdot)=\Phi(\varphi,n;\cdot)\in L_{p}$, such that $E_{n}(\Phi)_{L_{p}}=E_{n}(\varphi)_{L_{p}}$ and for all $n\geq n_0(\alpha,r,p)$ the following equality holds
 \begin{align}\label{eq1}
 &\frac{1}{\pi}\left| \int\limits_{-\pi}^{\pi}(\Phi(t)-t_{n-1}^{*}(t)) P_{\alpha,r,\beta}^{(n)}(-t)dt  \right| 
  = e^{-\alpha n^{r}}n^{\frac{1-r}{p}} \left( 
\frac{\|\cos t\|_{p'}}{\pi^{1+\frac{1}{p'}}(\alpha r)^{\frac{1}{p}}}F^{\frac{1}{p'}}\Big(\frac{1}{2}, \frac{3-p'}{2}; \frac{3}{2}; 1\Big) \right.
 \notag \\
 &
 \left.+ 
 \gamma_{n,p}\Big(\Big(1+\frac{(\alpha r)^{\frac{p'-1}{p}}}{p'-1}\Big) \frac{1}{n^{\frac{1-r}{p}}}+\frac{(p)^{\frac{1}{p'}}}{(\alpha r)^{1+\frac{1}{p}}}\frac{1}{n^{r}}\Big)
   \right)
 E_{n}(\varphi)_{L_{p}}, \ \ \frac{1}{p}+\frac{1}{p'}=1,
 \end{align}
where $t_{n-1}^{*}$ is the polynomial of the best approximation of the order $n-1$ of the function $\Phi$ in the space $L_{p}$, $|\gamma_{n,p}|\leq (14\pi)^{2}$.

In this case for an arbitrary function $f\in C^{\alpha,r}_{\beta}L_{p}$, $1<p<\infty$, there exists a function $\Phi(\cdot)=\Phi(f^{\alpha,r}_{\beta}; \cdot)$, such that $E_{n}(\Phi)_{L_{p}}=E_{n}(f^{\alpha,r}_{\beta})_{L_{p}}$, and for  $n\geq n_0(\alpha,r,p)$ the formula \eqref{eq1} holds, where as function $\varphi$ we take the function $f^{\alpha,r}_{\beta}$.

Let us assume 
\begin{equation*}
\mathcal{F}\cdot)=\mathcal{J}^{\alpha,r}_{\beta}\Big(\Phi(\cdot)-\frac{a_{0}}{2}\Big),
\nonumber
\end{equation*}
where
\begin{equation*}
a_{0}=a_{0}(\Phi):=\frac{1}{\pi}\int\limits_{-\pi}^{\pi}\Phi(t)dt.
\nonumber
\end{equation*}
The function $F$ is the function,  which we have looked for, because $F\in C^{\alpha,r}_{\beta}L_{p}$ and
\begin{equation*}
E_{n}(\mathcal{F}^{\alpha,r}_{\beta})_{L_{p}}=E_{n}(\Phi-\frac{a_{0}}{2})_{L_{p}}=
E_{n}(\Phi)_{L_{p}}=E_{n}(f^{\alpha,r}_{\beta})_{L_{p}},
\nonumber
\end{equation*}
so  (\ref{repr}), (\ref{for1}),  (\ref{Theorem1Ineq1}) and (\ref{eq1}) imply (\ref{Theorem1Eq1}).

At last let us prove (\ref{eq1}). Let $\varphi\in L_{p}$, $1<p<\infty$.
Then as a function $\Phi(t)$ we consider the function
 \begin{equation}\label{Phi}
\Phi(t)=\|P_{\alpha,r, -\beta}^{(n)}\|_{p'}^{1-p'} |P_{\alpha,r, -\beta}^{(n)}(t)|^{p'-1}\mathrm{sign}(P_{\alpha,r, -\beta}^{(n)}(t)) E_{n}(\varphi)_{L_{p}}
\end{equation}

For this function
 \begin{align*}
\|\Phi\|_{p}&=\|P_{\alpha,r, -\beta}^{(n)}\|_{p'}^{1-p'} \||P_{\alpha,r, -\beta}^{(n)}|^{p'-1}\|_{p}    E_{n}(\varphi)_{L_{p}}
\notag \\
&=
\|P_{\alpha,r, -\beta}^{(n)}\|_{p'}^{1-p'} \|P_{\alpha,r, -\beta}^{(n)}\|_{p'}^{p'-1}    E_{n}(\varphi)_{L_{p}}=
E_{n}(\varphi)_{L_{p}}.
\end{align*}

Now we show that the polynomial $t_{n-1}^{*}$ of best approximation of order $n-1$ in the space $L_{p}$ of the function $\Phi(t)$ equals identically to zero: $t_{n-1}^{*}\equiv0$.

For any $t_{n-1}\in\tau_{2n-1}$  
\begin{equation*}
\int\limits_{0}^{2\pi}t_{n-1}(t) |\Phi(t)|^{p-1}\mathrm{sign}(\Phi(t))dt\!=\!
\| P_{\alpha,r, -\beta}^{(n)}\|_{p'}^{-1}(E_{n}(\varphi)_{L_{p}})^{p-1}\!\int\limits_{-\pi}^{\pi}t_{n-1}(t)P_{\alpha,r, -\beta}^{(n)}(t)dt
=0.
\end{equation*}

Then, according to Proposition 1.4.12 of the work \cite[p. 29]{Korn} we can make conclusion, that the polynomial $t_{n-1}^{*}\equiv0$ is the polynomial of the best approximation of the function $\Phi(t)$ in the space $L_{p}$, $1<p<\infty$.

For the function $\Phi(t)$ of the form \eqref{Phi} we can write 
\begin{align}\label{Phi_equal}
&\frac{1}{\pi}\int\limits_{-\pi}^{\pi} (\Phi(t)-t_{n-1}^{*}(t))P_{\alpha,r,\beta}^{(n)}(-t)dt
\notag \\
=&\frac{1}{\pi}\int\limits_{-\pi}^{\pi} \Phi(t)P_{\alpha,r,\beta}^{(n)}(-t)dt
=\frac{1}{\pi}\int\limits_{-\pi}^{\pi} \Phi(t)P_{\alpha,r,-\beta}^{(n)}(t)dt
\notag \\
=&\frac{1}{\pi}\|P_{\alpha,r, -\beta}^{(n)} \|_{p'}^{1-p'} E_{n}(\varphi)_{L_{p}}
\int\limits_{-\pi}^{\pi} |P_{\alpha,r,-\beta}^{(n)}(t)|^{p'}dt
=\frac{1}{\pi}\|P_{\alpha,r, -\beta}^{(n)} \|_{p'} E_{n}(\varphi)_{L_{p}}.
\end{align}
Thus from (\ref{normKern1})  and (\ref{Phi_equal}) we get \eqref{Theorem1Eq1}.
Theorem~\ref{theorem1} is proved.
\end{proof}

\begin{theorem}\label{theorem2}
Let $0<r<1$, $\alpha>0$, $\beta\in\mathbb{R}$, $n\in\mathbb{N}$. Then, for any 
 $f\in C^{\alpha,r}_{\beta}L_{1}$ and $n\geq n_0(\alpha,r,1)$ the following inequality holds:
 \begin{equation}\label{Theorem2Ineq1}
\|f(\cdot)-S_{n-1}(f;\cdot)\|_{C}\leq e^{-\alpha n^{r}}n^{1-r}\Big(
\frac{1}{\pi\alpha r}+\gamma_{n,1}\Big(\frac{1}{(\alpha r)^{2}}\frac{1}{n^{r}}+\frac{1}{n^{1-r}}\Big)\Big)E_{n}(f^{\alpha,r}_{\beta})_{L_{1}}.
 \end{equation}
 
 Moreover, for any function  $f\in C^{\alpha,r}_{\beta}L_{1}$ one can find a function ${\mathcal{F}x)=\mathcal{F}f;n,x)}$ in the set $C^{\alpha,r}_{\beta}L_{1}$, such that
  $E_{n}(\mathcal{F}^{\alpha,r}_{\beta})_{L_{1}}=E_{n}(f^{\alpha,r}_{\beta})_{L_{1}}$ 
and for $n>n_{0}(\alpha,r,1)$ the following equality holds  
 \begin{equation}\label{Theorem2Eq}
\|\mathcal{F}\cdot)-S_{n-1}(F;\cdot)\|_{C}= e^{-\alpha n^{r}}n^{1-r}\Big(
\frac{1}{\pi\alpha r}+\gamma_{n,1}\Big(\frac{1}{(\alpha r)^{2}}\frac{1}{n^{r}}+\frac{1}{n^{1-r}}\Big)\Big)E_{n}(f^{\alpha,r}_{\beta})_{L_{1}}.
 \end{equation}  
  
In (\ref{Theorem2Ineq1})  and (\ref{Theorem2Eq}) the quantity ${\gamma_{n,1}=\gamma_{n,1}(\alpha,r,\beta)}$ is such that ${|\gamma_{n,1}|\leq(14\pi)^{2}}$.
\end{theorem}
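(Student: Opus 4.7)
The strategy is to repeat the scheme of Theorem~\ref{theorem1} in the limiting case $p=1$, $p'=\infty$, paying attention to the points where the $p>1$ construction degenerates.

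For the upper bound \eqref{Theorem2Ineq1} I would start from the integral representation \eqref{repr} and, using the orthogonality of $P^{(n)}_{\alpha,r,\beta}$ to $\tau_{2n-1}$, replace $f^{\alpha,r}_{\beta}$ by $\delta_n = f^{\alpha,r}_{\beta} - t^{*}_{n-1}$, where $t^{*}_{n-1}$ is the polynomial of best $L_1$-approximation of $f^{\alpha,r}_{\beta}$, so that $\|\delta_n\|_1 = E_n(f^{\alpha,r}_{\beta})_{L_1}$. The H\"older-type inequality \eqref{HolderIneq} with the pair $(p,p')=(1,\infty)$ then gives
$$
\|f-S_{n-1}(f;\cdot)\|_{C} \leq \frac{1}{\pi}\,\|P^{(n)}_{\alpha,r,\beta}\|_{\infty}\,E_n(f^{\alpha,r}_{\beta})_{L_1}.
$$
The analytic core is the $p'=\infty$ specialization of \eqref{normKern}, which by the cited Serdyuk--Stepanyuk results takes the form
$$
\frac{1}{\pi}\|P^{(n)}_{\alpha,r,\beta}\|_{\infty}
= e^{-\alpha n^{r}} n^{1-r}\Big(\frac{1}{\pi\alpha r} + \gamma_{n,1}\Big(\frac{1}{(\alpha r)^{2} n^{r}} + \frac{1}{n^{1-r}}\Big)\Big),
$$
with $|\gamma_{n,1}|\leq(14\pi)^{2}$. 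Substituting this into the previous display yields \eqref{Theorem2Ineq1}.

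For the sharpness assertion \eqref{Theorem2Eq}, I would mirror the second half of the proof of Theorem~\ref{theorem1}: produce $\Phi\in L_1$ with $E_n(\Phi)_{L_1}=E_n(\varphi)_{L_1}$ and with $\frac{1}{\pi}|\int_{-\pi}^{\pi}\Phi(t)P^{(n)}_{\alpha,r,\beta}(-t)\,dt|$ matching the leading term of the upper bound up to the same error scale, and then set $\mathcal{F} = \mathcal{J}^{\alpha,r}_{\beta}(\Phi - a_0(\Phi)/2)$. The explicit choice \eqref{Phi}, namely $\Phi\propto\|K\|_{p'}^{1-p'}|K|^{p'-1}\mathrm{sign}(K)$, breaks down at $p'=\infty$: it becomes formally a unit mass supported on the (generically finite) set where $|P^{(n)}_{\alpha,r,-\beta}|$ attains its sup-norm, which is not an $L_1$ function.

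The main obstacle is therefore to replace this singular optimizer by an honest $L_1$ function $\Phi$ that simultaneously satisfies (i) $\|\Phi\|_1 = E_n(\varphi)_{L_1}$, (ii) $\mathrm{sign}(\Phi)$ is orthogonal to $\tau_{2n-1}$, so that Proposition~1.4.12 of \cite[p.~29]{Korn} identifies $t^{*}_{n-1}\equiv 0$ as the polynomial of best $L_1$-approximation of $\Phi$, and (iii) $|\int\Phi\,P^{(n)}_{\alpha,r,-\beta}|$ is within the admissible error of $\|\Phi\|_1\,\|P^{(n)}_{\alpha,r,-\beta}\|_{\infty}$. A concrete candidate is to concentrate $\Phi$ on a short arc around a maximizing point $t_{*}$ of $|P^{(n)}_{\alpha,r,-\beta}|$, and repeat the arc with alternating signs at translates spaced by the half-period $\pi/n$ of the dominant harmonic $e^{-\alpha n^{r}}\cos(nt+\beta\pi/2)$ of $P^{(n)}_{\alpha,r,-\beta}$. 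This makes $\mathrm{sign}(\Phi)$ coincide with a translate of $\mathrm{sign}(\cos(n\cdot))$, whose Fourier series is supported at the frequencies $n,3n,5n,\dots$ and hence lies in $\tau_{2n-1}^{\perp}$, securing (ii); the $L_1$-normalization gives (i); and (iii) follows by estimating the oscillation of $P^{(n)}_{\alpha,r,-\beta}$ around its maximum using the same asymptotic input that delivered \eqref{normKern}. Once $\Phi$ is produced, the closing computation is a verbatim adaptation of the last block of the proof of Theorem~\ref{theorem1}: the identities $E_n(\mathcal{F}^{\alpha,r}_{\beta})_{L_1} = E_n(\Phi)_{L_1} = E_n(f^{\alpha,r}_{\beta})_{L_1}$, the representation (\ref{for1}), and the $p'=\infty$ asymptotic for $\frac{1}{\pi}\|P^{(n)}_{\alpha,r,\beta}\|_{\infty}$ combine to produce \eqref{Theorem2Eq}, with the residual error bookkept inside a new quantity $\gamma_{n,1}$ still bounded by $(14\pi)^{2}$.
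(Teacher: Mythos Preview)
Your plan is essentially the same as the paper's. The upper bound is identical: \eqref{for1}, H\"older with $(p,p')=(1,\infty)$, and the cited $p'=\infty$ asymptotic for $\tfrac{1}{\pi}\|P^{(n)}_{\alpha,r,\beta}\|_{\infty}$.

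For the sharpness half, the paper implements your ``concentrated $L_1$ surrogate for the Dirac optimizer'' idea with one small but important difference, and adds one technical step you only allude to:
\begin{itemize}
\item Rather than repeating the arc with equal mass at the translates $t_*+k\pi/n$ (which would average $P^{(n)}_{\alpha,r,-\beta}$ over all those points and lose the sup-norm in (iii)), the paper puts the main mass on a \emph{single} short arc $\ell^*$ near the maximizer $t^*$ and a tiny uniform mass $\varepsilon\,E_n(\varphi)_{L_1}\,\mathrm{sign}\cos(nt+\tfrac{\beta\pi}{2})$ on all of $T\setminus\ell^*$. This makes $\mathrm{sign}(\Phi_\varepsilon)=\mathrm{sign}\cos(nt+\tfrac{\beta\pi}{2})$ everywhere, so the $L_1$ best-approximation criterion (Korneichuk, Prop.~1.4.12) applies directly, and the integral against $P^{(n)}_{\alpha,r,-\beta}$ is within $O(\varepsilon)$ of $\|\Phi_\varepsilon\|_1\|P^{(n)}_{\alpha,r,\beta}\|_C$.
\item To absorb that $O(\varepsilon)$ loss while keeping $|\gamma_{n,1}|\le(14\pi)^2$, the paper first re-derives the asymptotic for $\tfrac{1}{\pi}\|P^{(n)}_{\alpha,r,\beta}\|_\infty$ with an explicit constant \emph{strictly smaller} than $(14\pi)^2$ (combining formulas (34), (50)--(52), (87), (99) of \cite{SerdyukStepanyuk2018}), and then chooses $\varepsilon$ small enough that the concentration error fits in the gap. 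Your closing sentence ``bookkept inside a new $\gamma_{n,1}$'' is exactly this, but the mechanism is the refined constant.
\end{itemize}
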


\begin{proof}[Proof of Theorem~\ref{theorem2}]
At the beginning let us show  that (\ref{Theorem2Ineq1}) holds.
Let $f\in C^{\alpha,r}_{\beta}L_{1}$. Then, according to (\ref{for1}) and (\ref{HolderIneq})
 \begin{equation}\label{ff1}
\|f(\cdot)-S_{n-1}(f;\cdot)\|_{C}= 
\frac{1}{\pi} \int\limits_{-\pi}^{\pi} (f^{\alpha, r}_{\beta}(t)-t_{n-1}^{*}(t))P_{\alpha,r,\beta}^{(n)}(x-t)dt 
\leq \frac{1}{\pi} \|P_{\alpha,r,\beta}^{(n)} \|_{\infty}E_{n}(f^{\alpha,r}_{\beta})_{L_{1}},
 \end{equation}  
where $t_{n-1}^{*}\in \tau_{2n-1}$ is the polynomial of the best approximation of the function $f^{\alpha,r}_{\beta}$ in the space $L_{1}$.

From formula (20) of the work \cite{SerdyukStepanyuk2017} (see also \cite{SerdyukStepanyuk2016}  and \cite{SerdyukStepanyuk2018}) for arbitrary $r\in(0,1)$, $\alpha>0$, $\beta\in \mathbb{R}$, $n\in\mathbb{N}$, $n\geq n_{0}(\alpha, r,1)$ it follows that
 \begin{equation}\label{ff2}
 \frac{1}{\pi} \|P_{\alpha,r,\beta}^{(n)} \|_{\infty}
 =e^{-\alpha n^{r}}n^{1-r}\Big(\frac{1}{\alpha r\pi}+\gamma_{n,1}\Big(\frac{1}{(\alpha r)^{2}n^{r}}+\frac{1}{n^{1-r}} \Big) \Big),
 \end{equation}  
where the quantity ${\gamma_{n,1}=\gamma_{n,1}(\alpha,r,\beta)}$ is such that ${|\gamma_{n,1}|\leq(14\pi)^{2}}$.

It is clear, that from  $P_{\alpha,r,\beta}^{(n)}\in C$ it follows that the norm $\|P_{\alpha,r,\beta}^{(n)}\|_{\infty}$ in (\ref{ff1}) and (\ref{ff2}) can be substituted by $\|P_{\alpha,r,\beta}^{(n)}\|_{C}$.

Combining formulas (\ref{ff1}) and (\ref{ff2}), we get (\ref{Theorem2Ineq1}).

To prove the second part of Theorem~\ref{theorem2}
we need for any function $\varphi\in L_{1}$ to find the function $\Phi(\cdot)=\Phi(\varphi, \cdot)\in L_{1}$, such that
$E_{n}(\Phi)_{L_{1}}=E_{n}(\varphi)_{L_{1}}$ and for all $n\geq n_{0}(\alpha,r,1)$ the following equality holds
\begin{align}\label{th2Eq1}
&\frac{1}{\pi}\left|\int\limits_{-\pi}^{\pi}\left(\Phi(t) -t_{n-1}^{*}(t)\right)P_{\alpha,r,\beta}^{(n)}(-t)dt\right| \notag \\
&=e^{-\alpha n^{r}}n^{1-r} 
\Big(
\frac{1}{\pi\alpha r}+\gamma_{n,1}\Big(\frac{1}{(\alpha r)^{2}}\frac{1}{n^{r}}+\frac{1}{n^{1-r}}\Big)\Big)E_{n}(\varphi)_{L_{1}},
\end{align}
where $t_{n-1}^{*}$ is the polynomial of the best approximation of order $n-1$ of the function $\Phi$ in the space $L_{1}$ and ${|\gamma_{n,1}|\leq(14\pi)^{2}}$.

In this case for any function $f\in C^{\alpha,r}_{\beta}L_{1}$ there exists a function $\Phi(\cdot)=\Phi(f^{\alpha,r}_{\beta};\cdot)$, such that $E_{n}(\Phi)_{L_{1}}=E_{n}(f^{\alpha,r}_{\beta})$, and for $n\geq n_{0}(\alpha,r,1)$ the formula (\ref{th2Eq1})  holds, where as function $\varphi$ we will take the function $f^{\alpha,r}_{\beta}$.

Let us consider the function
\begin{equation*}
\mathcal{F}\cdot)=\mathcal{J}^{\alpha,r}_{\beta}(\Phi(\cdot)-\frac{a_{0}}{2}),
\end{equation*}
where 
\begin{equation*}
a_{0}=a_{0}(\Phi):=\frac{1}{\pi}\int\limits_{-\pi}^{\pi}\Phi(t)dt.
\end{equation*}

The function $F$ is the function, which we look for, because
$F\in C^{\alpha,r}_{\beta}L_{1}$ and 
\begin{equation*}
E_{n}(\mathcal{F}^{\alpha,r}_{\beta})_{L_{1}}=E_{n}(\Phi-\frac{a_{0}}{2})_{L_{1}}=
E_{n}(\Phi)_{L_{1}}=E_{n}(f^{\alpha,r}_{\beta})_{L_{1}},
\nonumber
\end{equation*}
and on the basis (\ref{repr}),  (\ref{for1}), (\ref{Theorem2Ineq1})  and  (\ref{th2Eq1}) the formula (\ref{Theorem2Eq}) holds.

Let us prove (\ref{th2Eq1}). Let $t^{*}$ be the point from the interval $T=\Big[\frac{\pi(1-\beta)}{2n}, \ 2\pi+\frac{\pi(1-\beta)}{2n} \Big)$, where the function $|P_{\alpha,r,-\beta}^{(n)}|$  attains its largest value, i.e.,
\begin{equation*}
|P_{\alpha,r,-\beta}^{(n)}(t^{*})|=\| P_{\alpha,r,-\beta}^{(n)}\|_{C}=
\| P_{\alpha,r,\beta}^{(n)}\|_{C}.
\nonumber
\end{equation*}

Let put $\Delta_{k}^{n}:=\Big[\frac{(k-1)\pi}{n}+\frac{\pi(1-\beta)}{2n}, \frac{k\pi}{n}+\frac{\pi(1-\beta)}{2n} \Big)$, $k=1,...,2n$.
By  $k^{*}$  we denote the number, such that $t^{*}\in \Delta_{k^{*}}^{n}$.
Taking into account, that function $P_{\alpha,r,-\beta}^{(n)}$ is absolutely continuous, so for arbitrary $\varepsilon>0$ there exists a segment $\ell^{*}=[\xi^{*}, \xi^{*}+\delta]\subset \Delta_{k^{*}}^{n}$, such that for arbitrary $t\in \ell^{*}$ the following inequality holds
${|P_{\alpha,r,-\beta}^{(n)}(t)|>\| P_{\alpha,r,\beta}^{(n)}\|_{C}-\varepsilon}$.
It is clear that $\mathrm{mes}\, \ell^{*}=|\ell^{*}|=\delta<\frac{\pi}{n}$.

For arbitrary $\varphi\in L_{1}$ and $\varepsilon>0$ we consider the function $\Phi_{\varepsilon}(t)$, which on the segment $T$ is defined with a help of equalities

\begin{equation*}
\Phi_{\varepsilon}(t)=
\begin{cases}
E_{n}(\varphi)_{L_1}\frac{1-\varepsilon(2\pi-\delta)}{\delta}\mathrm{sign}\cos \Big( nt+\frac{\beta\pi}{2}\Big), & t\in \ell^{*}, \\
E_{n}(\varphi)_{L_1} \varepsilon \  \mathrm{sign}\cos\Big( nt+\frac{\beta\pi}{2}\Big), &
t\in \mathrm{T}\setminus \ell^{*}.
  \end{cases}
\nonumber
\end{equation*}

For the function $\Phi_{\varepsilon}(t)$ for arbitrary small values of $\varepsilon>0$  $(\varepsilon\in(0, \frac{1}{2\pi}))$ the following equality holds
\begin{align}\label{Phi_L1}
 \|\Phi_{\varepsilon}\|_{1}&=
E_{n}(\varphi)_{L_1}\frac{1-\varepsilon(2\pi-\delta)}{\delta}
\int\limits_{\ell^{*}}\Big| \mathrm{sign}\cos\Big( nt+\frac{\beta\pi}{2}\Big) \Big| dt \notag \\
&+
E_{n}(\varphi)_{L_1}\varepsilon
\int\limits_{\mathrm{T}\setminus \ell^{*}}\Big| \mathrm{sign}\cos\Big( nt+\frac{\beta\pi}{2}\Big) \Big| dt
\notag \\
&= 
E_{n}(\varphi)_{L_1}\left(\frac{1-\varepsilon(2\pi-\delta)}{\delta}\delta+\varepsilon(2\pi-\delta)   \right)=E_{n}(\varphi)_{L_1}.
\end{align}

It should be noticed, that
\begin{equation}\label{sign}
\mathrm{sign} \Phi_{\varepsilon}(t)=\mathrm{sign}\cos\Big( nt+\frac{\beta\pi}{2}\Big).
\end{equation}

Since for arbitrary trigonometric polynomial $t_{n-1}\in \tau_{2n-1}$
\begin{equation*}
\int\limits_{0}^{2\pi}t_{n-1}(t)\mathrm{sign}\cos\Big( nt+\frac{\beta\pi}{2}\Big) dt=0,
\nonumber
\end{equation*}

so, taking into account (\ref{sign})
\begin{equation*}
\int\limits_{0}^{2\pi}t_{n-1}(t)\mathrm{sign}\Big(\Phi_{\varepsilon}(t)-0 \Big) dt=0,
\ \ \ t_{n-1}\in\tau_{2n-1}.
\nonumber
\end{equation*}

According to  Proposition 1.4.12 of the work \cite[p.29]{Korn} the polynomial $t_{n-1}^{*}\equiv0$ is a polynomial of the best approximation of the function $\Phi_{\varepsilon}$ in the metric of the space $L_{1}$, i.e., $E_{n}(\Phi_{\varepsilon})_{L_1}=\| \Phi_{\varepsilon}\|_{1}$, so  (\ref{Phi_L1}) yields
$E_{n}(\Phi_{\varepsilon})_{L_1}=E_{n}(\varphi)_{L_1}$.

Moreover, for the function $\Phi_{\varepsilon}$
\begin{align}\label{form_eq1}
&\frac{1}{\pi}\int\limits_{-\pi}^{\pi}(\Phi_{\varepsilon}(t)-t_{n-1}^{*}(t))P_{\alpha,r,\beta}^{(n)}(-t)dt
 =
\frac{1}{\pi}\int\limits_{-\pi}^{\pi}\Phi_{\varepsilon}(t)P_{\alpha,r,-\beta}^{(n)}(t)dt \notag \\
=&\frac{1-\varepsilon(2\pi-\delta)}{\pi\delta}
E_{n}(\varphi)_{L_1}
\int\limits_{\ell^{*}} \mathrm{sign}\cos\Big( nt+\frac{\beta\pi}{2}\Big) P_{\alpha, r, -\beta}^{(n)}(t) dt \notag \\
+&
\frac{\varepsilon}{\pi}E_{n}(\varphi)_{L_1}
\int\limits_{\mathrm{T}\setminus \ell^{*}} \mathrm{sign}\cos\Big( nt+\frac{\beta\pi}{2}\Big) P_{\alpha, r, -\beta}^{(n)}(t) dt.
\end{align}

Taking into account, that $ \mathrm{sign} \Phi_{\varepsilon}(t)=(-1)^{k}$, $t\in \Delta_{k}^{(n)}, \  k=1,..., 2n$, and also the embedding $\ell^{*}\subset \Delta_{k^{*}}^{(n)}$, we get
\begin{align}\label{form_eq2}
&\left|\frac{1-\varepsilon(2\pi-\delta)}{\pi\delta}
E_{n}(\varphi)_{L_1}
\int\limits_{\ell^{*}} \mathrm{sign}\cos\Big( nt+\frac{\beta\pi}{2}\Big) P_{\alpha, r, -\beta}^{(n)}(t) dt \right| \notag \\
=
&\left|(-1)^{k^{*}}\frac{1-\varepsilon(2\pi-\delta)}{\pi\delta}
E_{n}(\varphi)_{L_1}
\int\limits_{\ell^{*}}  P_{\alpha, r, -\beta}^{(n)}(t) dt \right| \notag \\
\geq& 
\frac{1-\varepsilon(2\pi-\delta)}{\pi}
E_{n}(\varphi)_{L_1}
\left( \|P_{\alpha, r, \beta}^{(n)}\|_{C}-\varepsilon\right)
\notag \\
>&
\frac{1-2\pi\varepsilon}{\pi}
E_{n}(\varphi)_{L_1}
\left( \|P_{\alpha, r, \beta}^{(n)}\|_{C}-\varepsilon\right) \notag \\
=& 
\frac{1}{\pi}
E_{n}(\varphi)_{L_1}
\left( \|P_{\alpha, r, \beta}^{(n)}\|_{C}-
2\pi\varepsilon \|P_{\alpha, r, \beta}^{(n)}\|_{C}-\varepsilon+2\pi\varepsilon^{2}\right)\notag \\
>&
E_{n}(\varphi)_{L_1} \left(\frac{1}{\pi} \|P_{\alpha, r, \beta}^{(n)}\|_{C}-
\varepsilon \Big(2 \|P_{\alpha, r, \beta}^{(n)}\|_{C}+\frac{1}{\pi}\Big) \right).
\end{align}

Also, it is not hard to see that 
\begin{equation}\label{form_eq3}
\left| \frac{\varepsilon}{\pi}E_{n}(\varphi)_{L_1}
\int\limits_{\mathrm{T}\setminus \ell^{*}} \mathrm{sign}\cos\Big( nt+\frac{\beta\pi}{2}\Big) P_{\alpha, r, -\beta}^{(n)}(t) dt\right| \leq\frac{\varepsilon}{\pi}E_{n}(\varphi)_{L_1} \| P_{\alpha, r, \beta}^{(n)}\|_{C}.
\end{equation}

Formulas (\ref{form_eq1})--(\ref{form_eq3}) yield the following inequality
\begin{align}\label{form_eq4}
& \left| \int\limits_{-\pi}^{\pi}\frac{1}{\pi}(\Phi_{\varepsilon}(t)-t_{n-1}^{*}(t)) P_{\alpha, r, \beta}^{(n)}(-t)dt \right| \notag \\
>&
E_{n}(\varphi)_{L_1}\left(\frac{1}{\pi} \| P_{\alpha, r, \beta}^{(n)}\|_{C}
-\varepsilon\Big(\Big(2+\frac{1}{\pi} \Big) \| P_{\alpha, r, \beta}^{(n)}\|_{C}+\frac{1}{\pi}\Big)\right).
\end{align}

Let us show, that on basis of the results of the work \cite{SerdyukStepanyuk2018}, the estimate (\ref{ff2}) can be improved, if we decrease the diapason for $|\gamma_{n,1}|$.

Formulas (34), (50)--(52) of the work \cite{SerdyukStepanyuk2018}, and also Remark 1 from \cite{SerdyukStepanyuk2018} allow us to write that for any $n\in\mathbb{N}$
\begin{equation}\label{form_eq5}
\|P_{\alpha, r, \beta}^{(n)}\|_{\infty}=
\|P_{\alpha, r, n}\|_{\infty}\Big(1+\delta_{n}^{(1)}\frac{M_{n}}{n} \Big),
\end{equation}
where
\begin{equation*}
P_{\alpha, r, n}(t)
:=\sum\limits_{k=0}^{\infty}e^{-\alpha(k+n)^{r}}e^{ikt},
\end{equation*}
\begin{equation*}
M_{n}:
=\sup\limits_{t\in\mathbb{R}}\frac{|P'_{\alpha,r,n}(t)|}{|P_{\alpha,r,n}(t)|},
\end{equation*}
and for $\delta_{n}^{(1)}=\delta_{n}^{(1)}(\alpha,r,\beta)$ the following  estimate takes place $|\delta_{n}^{(1)}|\leq 5\sqrt{2}\pi$.

Then, as it follows from the estimates (87) and (99) of the work \cite{SerdyukStepanyuk2018} for ${n\geq n_{0}(\alpha,r,1)}$
\begin{equation}\label{form_eq8}
\|P_{\alpha, r, n}\|_{\infty}=
\frac{e^{-\alpha n^{r}}}{\alpha r}n^{1-r}\left(1+\theta_{\alpha,r,n}\Big(\frac{1-r}{\alpha r n^{r}}+\frac{\alpha r}{n^{1-r}} \Big) \right), \ \ \ |\theta_{\alpha,r,n}|\leq \frac{14}{13}
\end{equation}
and
\begin{equation}\label{form_eq9}
M_{n}\leq \frac{784\pi^{2}}{117}\Big( \frac{n^{1-r}}{\alpha r}+\alpha rn^{r}\Big).
\end{equation}

Combining formulas (\ref{form_eq5})--(\ref{form_eq9}) we obtain that for $n\geq n_{0}(\alpha,r,1)$
\begin{align}\label{form_eq10}
\frac{1}{\pi}\|P_{\alpha, r, \beta}^{(n)}\|_{\infty}&=
\frac{e^{-\alpha n^{r}}}{\alpha r \pi}n^{1-r}\left(1+ \theta_{\alpha,r,n}\Big(\frac{1-r}{\alpha r n^{r}}+\frac{\alpha r}{n^{1-r}} \Big)  \right)\Big(1+\delta_{n}^{(1)}\frac{M_{n}}{n}\Big) \notag \\
&=
e^{-\alpha n^{r}}n^{1-r}\Big(\frac{1}{\alpha r \pi}+\gamma_{n,1}\Big(\frac{1}{\alpha rn^{1-r}}+\frac{1}{n^{1-r}} \Big) \Big),
\end{align}
where 
\begin{equation}\label{form_eq11}
|\gamma_{n,1}|\leq \frac{1}{\pi}\left( \frac{14}{13}+ \frac{784\pi^{2} 5\sqrt{2}\pi}{117}+
\frac{14\cdot5\sqrt{2}\pi\cdot784\pi^{2}}{13\cdot117\cdot14} \right) 
=
\frac{14}{13\pi}\Big(1+\frac{3920\sqrt{2}\pi^{3}}{117} \Big).
\end{equation}

Let us  choose $\varepsilon$ small enough, that
\begin{equation}\label{form_eq12}
\varepsilon<\frac{\left((14\pi)^{2}-\frac{14}{13\pi}\left(1+  \frac{3920\sqrt{2}\pi^{3}}{117} \right) \right) e^{-\alpha n^{r}}n^{1-r} (\frac{1}{\alpha r n^{r}}+\frac{\alpha r}{n^{1-r}})}{(2+\frac{1}{\pi})\|P_{\alpha, r, \beta}^{(n)}\|_{\infty}+\frac{1}{\pi}}
\end{equation}
 and for this $\varepsilon$ we put 
\begin{equation}\label{form_eq13}
\Phi(t)=\Phi_{\varepsilon}(t).
\end{equation}

The function $\Phi(t)$ is the function, which we looked for, because 
 ${E_{n}(\Phi)_{L_1}=E_{n}(\varphi)_{L_1}}$ and according to (\ref{form_eq4}), (\ref{form_eq10})--(\ref{form_eq12}) for $n\geq n_{0}(\alpha,r,1)$
 \begin{align}\label{form_eq14}
&\left| \frac{1}{\pi}(\Phi(t)-t_{n-1}^{*}(t))P_{\alpha,r,\beta}^{(n)}(-t)dt\right| \notag \\
& >\!E_{n}(\varphi)_{L_1} \!\!\left(\frac{1}{\pi}\|P_{\alpha,r,\beta}^{(n)} \|_{C} \!-\!
\left(\!\!(14\pi)^{2}\!-\!\frac{14}{13\pi}\Big(1\!+ \! \frac{3920\sqrt{2}\pi^{3}}{117}
 \Big)\!\! \right)\!e^{-\alpha n^{r}}n^{1-r}\Big(\frac{1}{\alpha r n^{r}}\!+\!\frac{\alpha r}{n^{1-r}} \Big)\!\!
\right) \notag \\
&\geq e^{-\alpha n^{r}}n^{1-r}\left( \frac{1}{\alpha r\pi}-(14\pi)^{2}   \Big(\frac{1}{\alpha r n^{r}}+\frac{\alpha r}{n^{1-r}}\Big) \right) E_{n}(\varphi)_{L_1}.
\end{align}
 
 Formulas (\ref{form_eq14}), (\ref{ff1}) and (\ref{ff2}) imply (\ref{th2Eq1}).
Theorem~\ref{theorem2} is proved.
\end{proof}

It should be noticed, that inequalities (\ref{Theorem1Ineq1}) and (\ref{Theorem2Ineq1}) were announced in the work \cite{SerdyukStepanyuk2018Bul}. There it was also mentioned that that estimates (\ref{Theorem1Ineq1}) and (\ref{Theorem2Ineq1}) are asymptotically best possible on the classes $C^{\alpha,r}_{\beta,p}$, $1\leq p<\infty$.

If $f\in C^{\alpha,r}_{\beta,p}$, then $\|f^{\alpha,r}_{\beta}\|_{p}\leq 1$, and 
$E_{n}(f^{\alpha,r}_{\beta})_{L_{p}}\leq 1$, $1\leq p<\infty$.
 Considering the least upper bounds of both sides of inequality (\ref{Theorem1Ineq1})  over the classes $C^{\alpha,r}_{\beta,p}$, $1<p<\infty$, we  arrive at the inequality
\begin{align}\label{Theorem1Ineq1Remark}
{\cal E}_{n}(C^{\alpha,r}_{\beta,p})_{C}&\leq e^{-\alpha n^{r}}n^{\frac{1-r}{p}}\bigg(\frac{\|\cos t\|_{p'}}{\pi^{1+\frac{1}{p'}}(\alpha r)^{\frac{1}{p}}}F^{\frac{1}{p'}}\Big(\frac{1}{2}, \frac{3-p'}{2}; \frac{3}{2}; 1\Big)\notag \\
&
+
\gamma_{n,p}\Big(\Big(1+\frac{(\alpha r)^{\frac{p'-1}{p}}}{p'-1}\Big) \frac{1}{n^{\frac{1-r}{p}}}+\frac{(p)^{\frac{1}{p'}}}{(\alpha r)^{1+\frac{1}{p}}}\frac{1}{n^{r}}\Big)\bigg)E_{n}(f^{\alpha,r}_{\beta})_{L_{p}}, \ \ \frac{1}{p}+\frac{1}{p'}=1.
\end{align}

 Comparing this relation with the estimate of Theorem 4  from  \cite{SerdyukStepanyuk2017} (see also \cite{SerdyukStepanyuk2018}),
 we conclude that   inequality (\ref{Theorem1Ineq1}) on the classes $C^{\alpha,r}_{\beta,p}$, $1<p<\infty$, is asymptotically best possible.

  In the same way, the asymptotic sharpness of the estimate (\ref{Theorem2Ineq1}) on  the classes $C^{\alpha,r}_{\beta,1}$ follows from comparing 
 inequality
\begin{equation}\label{Theorem1Ineq2Remark}
{\cal E}_{n}(C^{\alpha,r}_{\beta,p})_{C} \leq e^{-\alpha n^{r}} n^{1-r}\Big(
\frac{1}{\pi\alpha r}+\gamma_{n,1}\Big(\frac{1}{(\alpha r)^{2}}\frac{1}{n^{r}}+\frac{1}{n^{1-r}}\Big)\Big)E_{n}(f^{\alpha,r}_{\beta})_{L_{1}}
\end{equation}  
   and formula (18) from  \cite{SerdyukStepanyuk2018}.


\section{Acknowledgements}

The second author is supported by the Austrian Science Fund FWF project F5506-N26 (part of the Special Research Program (SFB) ``Quasi-Monte Carlo Methods: Theory and Applications'') and partially is supported by grant of NAS of Ukraine for  groups of young scientists (project No16-10/2018).


\vspace{1cm}

\end{document}